\providecommand{\U}[1]{\protect\rule{.1in}{.1in}}
\newtheorem{theorem}{Theorem}
\newenvironment{proof}[1][Proof]{\noindent\textbf{#1.} }{\ \rule{0.5em}{0.5em}}
\begin{document}

$\ $

\vspace{2.cm}

\begin{center}
{\Large \textbf{Platonic Compounds of Cylinders}}

\vspace{.4cm} {\large \textbf{Oleg Ogievetsky$^{1,2,3}$
and Senya Shlosman$^{1,4,5}$}}

\vskip .3cm $^{1}$Aix Marseille Universit\'{e}, Universit\'{e} de
Toulon, CNRS, \\ CPT UMR 7332, 13288, Marseille, France

\vskip .05cm $^{2}$I.E.Tamm Department of Theoretical Physics, 
Lebedev Physical Institute,
Leninsky prospect 53, 119991, Moscow, Russia

\vskip .05cm $^{3}$Kazan Federal University, Kremlevskaya 17, Kazan
420008, Russia

\vskip .05cm $^{4}$Inst. of the Information Transmission Problems, RAS,
Moscow, Russia

\vskip .05cm $^{5}$Skolkovo Institute of Science and Technology,
Moscow, Russia

\end{center}

\vskip .4cm
\hfill{\sf 
... and the left leg merely does a forward aerial half turn}

\hspace{3.86cm}{\sf  every alternate step.}

\vskip .3cm
\hfill {\it Monty Python} $\ \ \ \ \ \ \ \ $

$\ $

\begin{abstract}\noindent
Generalizing the octahedral configuration of six congruent cylinders touching the unit sphere, we exhibit configurations of congruent cylinders associated 
to a pair of dual Platonic bodies. 
\end{abstract}

\newpage
\tableofcontents

\newpage
\section{Introduction}

In this paper we study \textit{compounds} of cylinders. By a compound $\varkappa$ of cylinders we mean a configuration of infinite open
nonintersecting right circular congruent cylinders in $\mathbb{R}^{3},$ touching the unit sphere $\mathbb{S}^{2}$, such that some of the cylinders in $\varkappa$ touch each other. 
We denote by $r\left(  \varkappa\right) $ the common radius of cylinders forming the compound. 

\vskip .2cm
Let $M^{k}$ be the manifold of $k$-tuples of straight lines
$$\mathbf{m}=\left\{  u_{1},...,u_{k}\right\}  $$ tangent to the sphere $\mathbb{S}^{2}$. The manifold $M^{k}$ has dimension $3k.$
The space of compounds of $k$ cylinders can be conveniently identified with the manifold $M^{k}$: the straight line corresponding to a cylinder is its unique ruling touching the unit sphere. Therefore the space of compounds of $k$ cylinders is $3k$-dimensional.

\vskip .2cm
For a configuration $\mathbf{m}$ we denote by $d\left(  \mathbf{m}\right)$ the minimal distance between the lines of the configuration.

\vskip .2cm
If $\varkappa$ is a compound with tangent rulings $\mathbf{m}$, then
\begin{equation}\label{radidi}
r\left(  \varkappa\right)  =\frac{d\left(  \mathbf{m}\right)  }{2-d\left( \mathbf{m}\right)  }\ .
\end{equation}
By a slight abuse of notation we allow the lines $u_{i}$ to intersect; then $r\left(  \varkappa\right)  =d\left( \mathbf{m}\right)  =0.$

\vskip .2cm
For every $k\geq3$ let us define the number
\[ R_{k}=\max_{\varkappa\in M^{k}}r\left(  \varkappa\right)\  .\]
By definition, $R_{k}$ is the maximal possible radius of $k$ congruent nonintersecting cylinders touching $\mathbb{S}^{2}.$ It is easy to see that
$R_{3}=3+2\sqrt{3}$, and a natural (open) conjecture is that $R_{4}=1+\sqrt{2}.$

\vskip .2cm
The compound $C_{6}$ of six unit parallel cylinders has appeared in one question of Kuperberg \cite{K} in 1990. For quite some time the common belief was that
$R_{6}=r\left(  C_{6}\right)  =1,$ until M. Firsching \cite{F} in 2016 has found by numerical exploration of the manifold $M^{6}$ a compound $\varkappa_{F}\in
M^{6}$ with $r\left(  \varkappa_{F}\right)  \approx1.049659.$ That finding has triggered our interest in the problem, and in the paper [OS] we have
discovered a curve $C_{6}(t)$ in the compound space $M^{6}$, $C_{6}\left(  0\right)  =$ $C_{6},$ along which the function $r\left(
C_{6}\left(  t\right)  \right)  $ increases to its maximal value on this curve, $r_{\mathfrak{m}}=\frac{1}{8}\left(  3+\sqrt{33}\right)  \approx
1.093070331,$ taken at the compound $C_{\mathfrak{m}}.$ 

\vskip .2cm
The compounds $C_6$ and $C_{\mathfrak{m}}$  are shown on Figure \ref{confCcyl} (the green unit ball is in the center).
\begin{figure}[H]
\vspace{-1cm}
\centering
$\ \ $\raisebox{.3cm}{\includegraphics[scale=0.745]{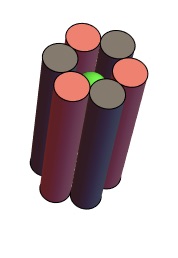}}$\ \ $\includegraphics[scale=0.36]{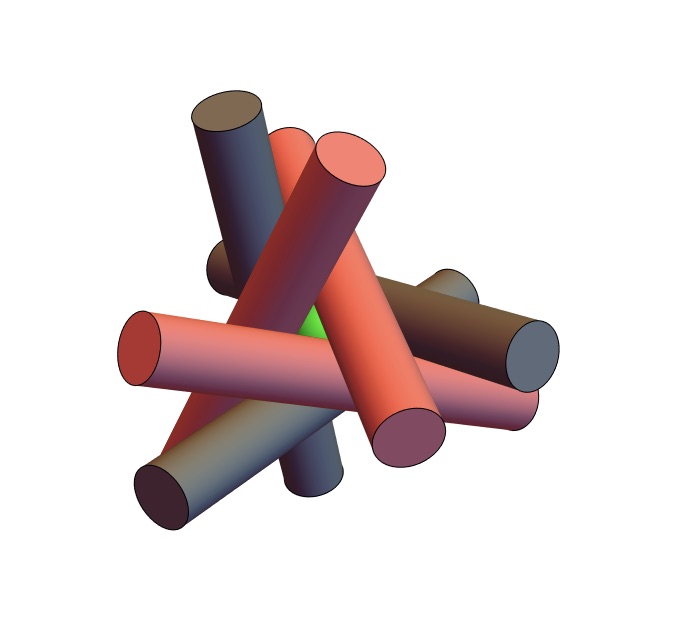}
\parbox{11.8cm}{
\caption{Two compounds of cylinders: the compound $C_6$ of six parallel cylinders of radius 1 (on the left) and the compound $C_{\mathfrak{m}}$ of six cylinders of radius $\,\approx\! 1.0931$ (on the right) } }
\label{confCcyl}
\end{figure}

\vskip .2cm
We have shown in [OS-C6] that in fact the compound $C_{\mathfrak{m}}$ is a point of local maximum (modulo global rotations) of
the function $r:$ for every compound $\varkappa$ close to $C_{\mathfrak{m}}$ we have $r\left(  \varkappa\right)  <r_{\mathfrak{m}}.$ 

\vskip .2cm
We believe that in fact
\[ R_{6}=r_{\mathfrak{m}}=\frac{1}{8}\left(  3+\sqrt{33}\right)  .\]
For values of $k$ larger than 3 the quantities $R_{k}$ are {unknown; at the end of this paper we present some lower bounds for few values of $k$.

\vskip .2cm
In his paper Kuperberg has presented yet another compound, $O_{6}\in M^{6}$ with the same radius $r\left(  O_{6}\right)  =1$. 

\vskip .2cm
The compound $O_6$ is shown on Figure \ref{octahedrConfc}; the central unit ball is again shown in green. 
\begin{figure}[H]
\vspace{-.42cm} \centering
\includegraphics[scale=0.52]{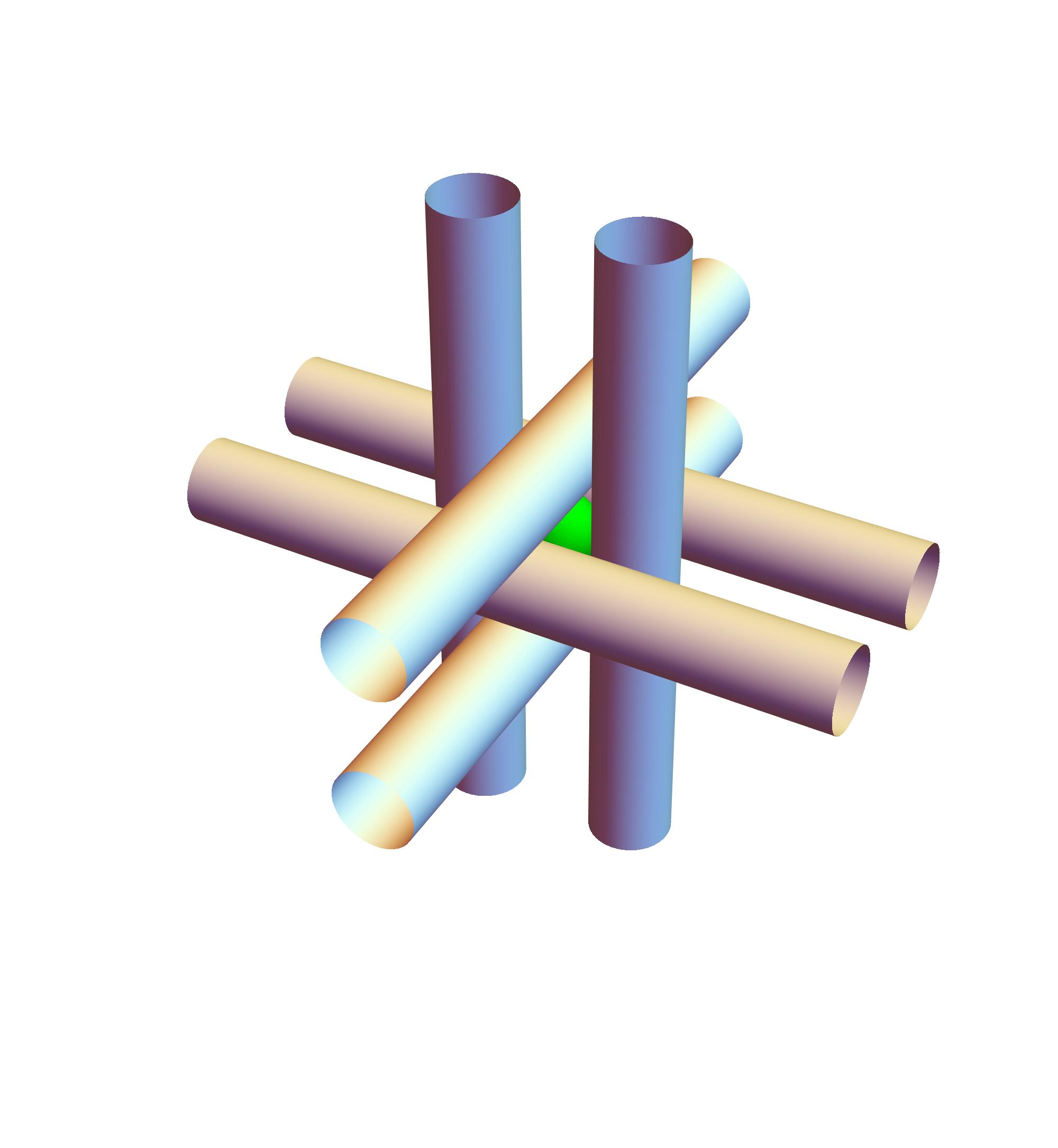}
\vspace{-1cm}
\caption{Compound $O_6$}
\label{octahedrConfc}
\end{figure}

Contrary to $C_{6}$, the compound $O_{6}$ is indeed a point of local maximum (modulo global rotations) of the function $r:$ for
every compound $\varkappa$ close to $O_{6}$ one has $r\left(  \varkappa \right)  <1.$ This was proved in our paper [OS-O6]. 

\vskip .2cm
The configuration $O_6$ of tangent lines is shown on Figure \ref{octahedrConft}.
\begin{figure}[H]
\vspace{-.3cm} \centering
\includegraphics[scale=0.78]{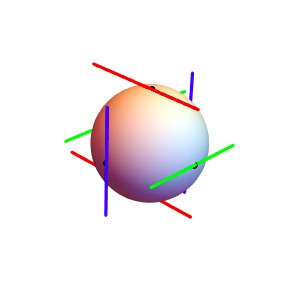}
\vspace{-1.4cm}
\caption{Configuration $O_6$ of tangent lines}
\label{octahedrConft}
\end{figure}

Sometimes the configuration $O_6$ is called `octahedral' because, probably, the tangency points lie at the vertices of the regular octahedron.
We present an interpretation of the configuration $O_6$ which relates it to the configuration of rotated 
edges of the regular tetrahedron. Following this interpretation we introduce configurations of tangent lines  
for each pair of dual Platonic bodies, that is, for the tetrahedron/tetrahedron, octahedron/cube and icosahedron/dodecahedron; 
for the pair tetrahedron/tetrahedron this is precisely the locally maximal configuration $O_6$.
The number of lines in our configurations is equal to the number of edges of either of Platonic bodies in the pair, that is, 
twelve for the pair octahedron/cube and thirty for the pair icosahedron/dodecahedron.

\vskip .2cm
We recall the definition of a \textit{critical configurations} from \cite{OS-O6}. We call a cylinder configuration \textit{critical} if for each small deformation $t$ 
that keeps the radii of the cylinders, either
\begin{itemize}
\item[(T1)] some cylinders start to intersect, or else
\item[(T2)] the distances between all of them increase, but by no more than
$$\sim\left\Vert t\right\Vert ^{2}\ ,$$
or stay zero, for some. 
\end{itemize}
Here $\left\Vert t\right\Vert $ is the natural norm, see details in \cite{OS-O6}.

\vskip .2cm
\noindent A critical configuration is called a \textsl{locally maximal} configuration if all its deformations are of the first type. Any other critical configuration  is called a
\textsl{saddle configuration}, and the deformations of type (T2) are then called the \textsl{unlocking} deformations.

\vskip .2cm
For the pair octahedron/cube we find two possible candidates for the critical points; For the pair  icosahedron/dodecahedron we find four possible candidates for the critical points.

\vskip .3cm
The paper is organized as follows. In Section \ref{O6tetrahedron} we present an interpretation of the configuration $O_6$ relating it to the edges of the tetrahedron. In Sections \ref{confplatonic}, \ref{placo} and \ref{mincom} we describe configurations of tangent lines associated to a pair of dual Platonic bodies.
We conclude the article by several conjectures 
concerning local maxima and critical points for the configurations related to the pairs octahedron/cube and icosahedron/dodecahedron. 

\section{Interpretation of the configuration $O_6$:\\ $\mathbb{A}_4$-symmetric configurations\vspace{.2cm}}\label{O6tetrahedron}
We shall now give an interpretation of the configuration $O_{6}$ which shows that it rather deserves
to be called the tetrahedral configuration.

\vskip .2cm
There is a family of configurations possessing the $\mathbb{A}_4$ symmetry ($\mathbb{A}_4$ is the group of proper symmetries of the regular tetrahedron,
alternating group on four letters). We start with the configuration of the tangent to the unit sphere lines which are continuations of the edges of the regular tetrahedron.
The points of the sphere at which tangent lines pass are the
edge middles of the regular tetrahedron. The initial position of the edges of the tetrahedron are shown in blue on Figure \ref{A4configurations}.

\begin{figure}[H]
\vspace{.2cm} \centering
\includegraphics[scale=0.5]{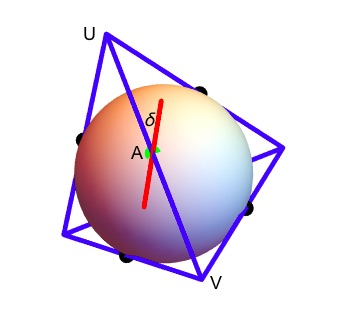}
\vskip -.2cm
\caption{Sphere tangent to tetrahedron edges}
\label{A4configurations}
\end{figure}

Then each edge is rotated around the diameter of the unit sphere, passing through the middle of the edge, by an angle $\delta$. On Figure \ref{A4configurations} the point $A$ (in green) is the middle of the edge $UV$. The line, passing through the point A and rotated by the angle $\delta$, is shown in red. The lines passing through
other middles of edges are rotated according to $\mathbb{A}_4$ symmetry. The positions of other lines are well defined because the stability subgroup of the edge $UV$ of the regular tetrahedron is generated by the rotation by angle $\pi$ around the diameter of the sphere passing through the point $A$ and, clearly, the red line on Figure \ref{A4configurations} is invariant under this rotation.

\vskip .2cm
The minimum of the squares of the distances between the lines is given by the formula
\begin{equation}\label{forTT}d^2=\frac{16 T^2}{(3T^2+1)(T^2+3)}\equiv -\,\frac{4\sin^2 (2\delta)}{\bigl(\cos (2\delta)-2\bigr)\,\bigl(\cos (2\delta)+2\bigr)}\ .\end{equation}
Here $T=\tan (\delta)$. When $\delta$ changes from 0 to $\pi/2$, the value  of $d^2$ increases, achieves the maximal value 1 for $\delta_0=\pi/4$, and afterwards decreases to 0, see Figure \ref{DistanceGraphA4}.
\begin{figure}[H]
\vspace{.2cm} \centering
\includegraphics[scale=0.6]{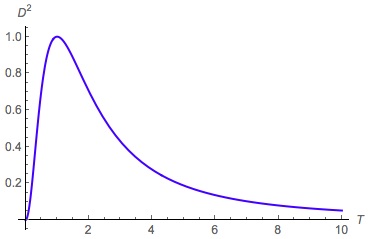}
\vskip -.2cm
\caption{Graph of $d^2(T)$ }
\label{DistanceGraphA4}
\end{figure}

For $\delta\neq 0,\pi/4,\pi/2$ the symmetry of the configuration is $\mathbb{A}_4$. For
$\delta=0$ the symmetry group is $S_4$, the full symmetry group of the regular tetrahedron. For $\delta=\pi/4$ the configuration is centrally symmetric, its  symmetry group is $\mathbb{A}_4\times C_2$, where the cyclic group $C_2$ is
generated by the central reflection $\mathcal{I}$. For $\delta=\pi/4$ this is exactly the configuration $O_6$.
For $\delta=\pi/2$ the tangent lines become again the continuations of the edges of the regular tetrahedron.

\vskip .2cm
The convex span of the edge middles of the regular tetrahedron is the regular octahedron. Our above observation boils down to the fact that rotating the 
edges in the plane tangent to the mid-sphere (the sphere which touches the middle of each edge of $\mathcal{P}$) we obtain the configuration $O_6$ when the 
angle of rotation is equal to $\pi/4$.

\vskip .2cm
The following pictures of models made with the help of a tensegrity kit (Figure \ref{TensegrityModels}) and its
Martian version (Figure \ref{MartianVersion}) might be helpful for understanding.
\begin{figure}[H]
\centering
\includegraphics[scale=0.8]{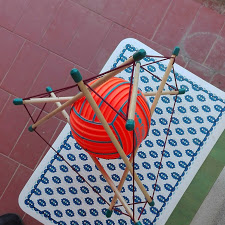}\hspace{.4cm}\includegraphics[scale=0.8]{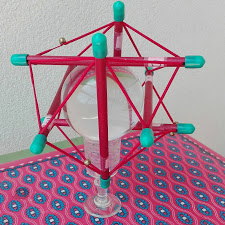}
\caption{Tensegrity models. The rotating edges of the tetrahedron (left). The position of edges at angle $\pi/4$ is the configuration $O_6$ (right). }
\label{TensegrityModels}
\end{figure}
\begin{figure}[H]
\vspace{-.5cm} 
\centering
\includegraphics[scale=0.22]{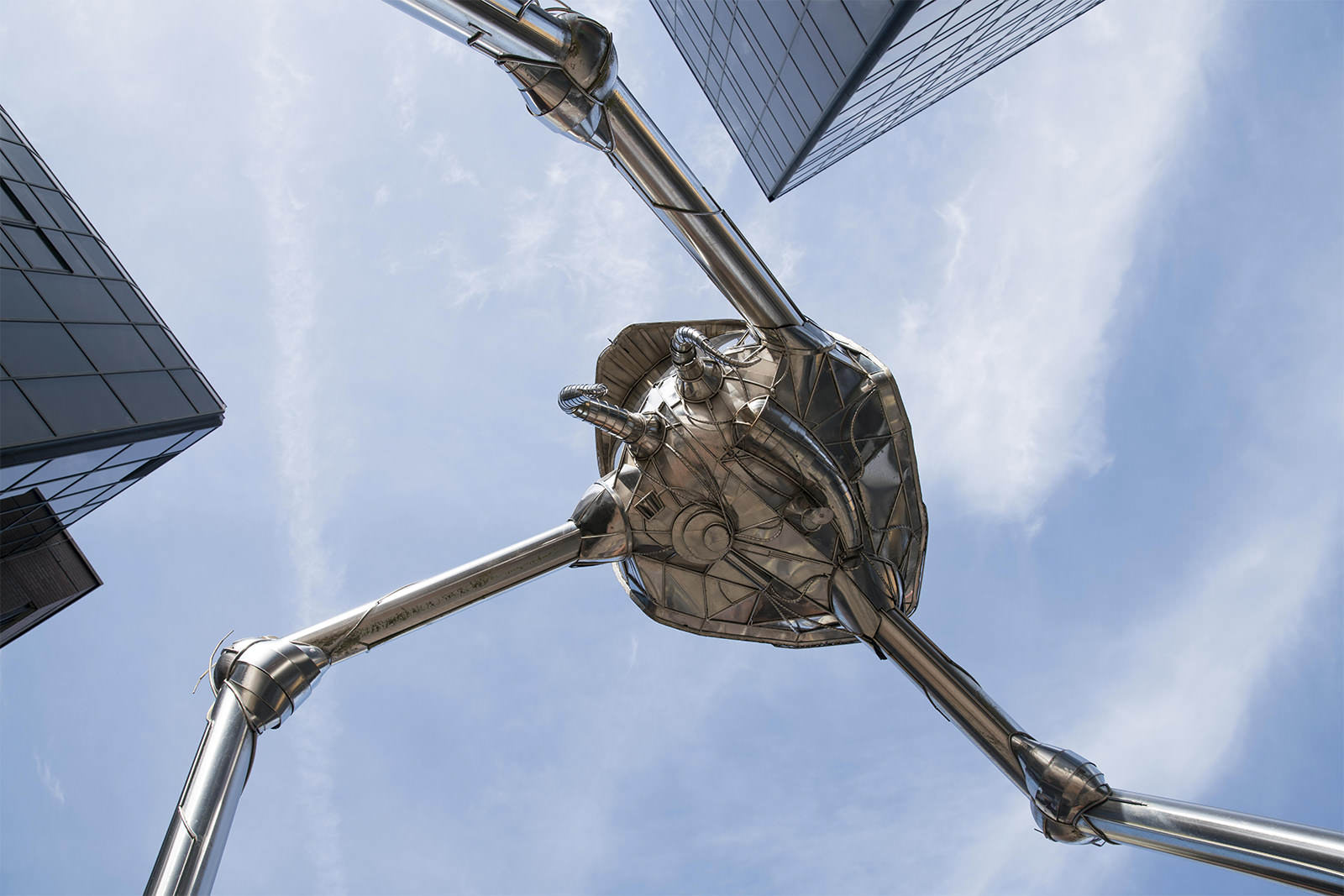}
\vskip -.2cm
\caption{Martian version (located in Woking, the hometown of H. G. Wells) of the rotating upper three edges of the tetrahedron, view from the center of the unit sphere.}
\label{MartianVersion}
\end{figure}

\section{Generalizations of the configuration $O_6$:\\ $\delta$-rotation process\vspace{.2cm}}\label{confplatonic}
We denote by $\mathcal{T}$, $\mathcal{O}$ and $\mathcal{I}$, respectively, the dual pairs tetrahedron/tetra\-hedron, octahedron/cube and icosahedron/dodecahedron of Platonic bodies. 
Let $N_\mathcal{T}=6$, $N_\mathcal{O}=12$ and $N_\mathcal{I}=30$ be the number of edges of a Platonic body in the corresponding pair.
Finally let $G_\mathcal{O}\cong\mathbb{S}_4$ and $G_\mathcal{I}\cong\mathbb{A}_5$ be the proper symmetry groups of the dual pairs octahedron/cube and icosahedron/dodecahedron.

\vskip .2cm 
A construction, similar to the one described in Section \ref{O6tetrahedron}, works for 
each pair of dual Platonic bodies. Namely, let a unit sphere touch the edge middles
of a Platonic body $\mathcal{P}$. Rotate the edges of $\mathcal{P}$ around the axes passing through the center of the sphere and tangency points. By the same reason as for the tetrahedron, it can be done preserving the proper symmetry group of $\mathcal{P}$; spelled differently, we rotate each edge in the same direction, say, counterclockwise, viewed from the tip of the axis, passing through the center of the sphere and the tangency point. When the angle $\delta$ of the rotation reaches $\pi/2$ the edges form the configuration of edges of the dual to $\mathcal{P}$ Platonic body. 

\vskip .2cm
In the subsequent Sections we investigate the details of this `$\delta$-{\it rotation}' process for the remaining pairs $\mathcal{O}$ and $\mathcal{I}$.

\subsection{Neighboring tangent lines\vspace{.1cm}}\label{neitali}
For a Platonic body $\mathcal{P}$, assume that the symmetry axis $\mathsf{a_v}$ passing through the vertex $\mathsf{v}$ of $\mathcal{P}$ is $\mathsf{k}$-fold. 
By `neighboring' edges of $\mathcal{P}$ we mean edges which share a vertex $\mathsf{v}$ and can be obtained from one another by a rotation 
through the angle $2\pi/\mathsf{k}$ around the axis $\mathsf{a_v}$. We keep the name `neighboring' for the tangent lines obtained from the neighboring edges in the 
$\delta$-rotation process. Note that this notion does not depend on a concrete Platonic body in a pair: neighboring tangent lines for a Platonic body $\mathcal{P}$
remain neighboring for the dual of $\mathcal{P}$.

\vskip .2cm
For some value of $\delta$ the distance between the neighboring tangent lines takes the maximal value.

\vskip .2cm
We now present the maximal values $d_\mathcal{O}$
and $d_\mathcal{I}$  between the neighboring tangent lines for the pairs $\mathcal{O}$ and $\mathcal{I}$. 

\subsubsection{Pair $\mathcal{O}$\vspace{.1cm}}\label{PairO}
For the pair octahedron/cube, let the octahedron correspond to $\delta=0$. 

\begin{theorem} The maximum distance in the $\delta$-rotation process for the pair $\mathcal{O}$ is
achieved at the angle $\delta_\mathcal{O}$ such that
\begin{equation}\label{valdodico0}\tan (\delta_\mathcal{O})=\frac{3^{1/4}}{\sqrt{2}}\end{equation}
(approximately, $\delta_\mathcal{O}\simeq 0.23856 \pi\simeq 0.74946$).
The square of the maximal distance between the tangent lines is the value of the function
\begin{equation}\label{valdodico}-\,\frac{4\sin^2 (2\delta)}{\bigl(\cos (2\delta)-3\bigr)\,\bigl(\cos (2\delta)+5\bigr)}
\end{equation}
at $\delta=\delta_\mathcal{O}$. It is equal to 
\begin{equation}\label{valdodicob}d_\mathcal{O}^2=2-\sqrt{3}=\frac{(-1+\sqrt{3})^2}{2}\simeq 0.26795\ .\end{equation}
\end{theorem}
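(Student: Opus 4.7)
My plan is to pick a convenient pair of neighboring edges of the octahedron, parametrize the corresponding rotated tangent lines, compute the squared distance between them as a function of $\delta$, match it to formula (\ref{valdodico}), and then maximize.

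First I would normalize so that the midsphere of the octahedron is the unit sphere, placing its vertices at $\pm\sqrt{2}\,e_i$, $i=1,2,3$. Then every edge midpoint has unit length. Since $G_{\mathcal O}$ acts transitively on ordered pairs of neighboring edges (the stabilizer of a vertex of the octahedron is $C_4$ and cycles the four incident edges), the distance between any pair of rotated neighboring lines depends only on $\delta$, so it is enough to work with one such pair. I would take the edges from $\sqrt{2}\,e_1$ to $\sqrt{2}\,e_2$ and to $\sqrt{2}\,e_3$, with tangent points $m_i=\tfrac1{\sqrt2}(e_1+e_{i+1})$ and unit directions $v_i=\tfrac1{\sqrt2}(e_{i+1}-e_1)$. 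Since $m_i\perp v_i$, the $\delta$-rotation sends $v_i$ to $v_i(\delta)=\cos\delta\,v_i+\sin\delta\,(m_i\times v_i)$, and direct cross-product computations give $m_1\times v_1=e_3$, $m_2\times v_2=-e_2$.

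Next I would apply the skew-line distance formula $d=|(m_2-m_1)\cdot(v_1(\delta)\times v_2(\delta))|/|v_1(\delta)\times v_2(\delta)|$. A short calculation yields $v_1(\delta)\cdot v_2(\delta)=\tfrac12\cos^2\delta$, hence $|v_1(\delta)\times v_2(\delta)|^2=1-\tfrac14\cos^4\delta$; the numerator triple product collapses to $\cos\delta\sin\delta$. Rewriting in terms of $\cos 2\delta$ and $\sin 2\delta$ gives
\[
d^2=\frac{4\cos^2\delta\,\sin^2\delta}{4-\cos^4\delta}=\frac{4\sin^2(2\delta)}{(3-\cos 2\delta)(5+\cos 2\delta)},
\]
which is exactly formula (\ref{valdodico}).

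Finally, setting $u=\cos 2\delta$ and $f(u)=4(1-u^2)/[(3-u)(5+u)]$, I would take the logarithmic derivative; after clearing denominators the critical-point equation reduces to the quadratic $u^2-14u+1=0$, whose only root in $[-1,1]$ is $u_0=7-4\sqrt3$. From the half-angle identity $\tan^2\delta=(1-u_0)/(1+u_0)$, a short rationalization by $2+\sqrt3$ yields $\tan^2\delta_{\mathcal O}=\sqrt3/2$, i.e.\ (\ref{valdodico0}). Substituting $u_0$ back into $f$ and simplifying produces $d_{\mathcal O}^2=2-\sqrt3$, establishing (\ref{valdodicob}). The only slightly delicate part of the argument is the final simplification with nested surds; the rest is linear algebra on fixed numerical vectors and a single one-variable optimization.
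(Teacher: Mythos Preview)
Your computation for the neighboring-line distance is correct and matches formula~(\ref{valdodico}); the optimization is also right, and your simplifications to $\tan^2\delta_{\mathcal O}=\sqrt{3}/2$ and $d_{\mathcal O}^2=2-\sqrt3$ are clean. Methodologically you take a more hands-on route than the paper: instead of invoking the general latitude/angle formula~(\ref{diab}) and specializing it to the octahedral values of $h$ and $\mathsf{k}$, you fix explicit coordinates for one neighboring pair and compute the skew-line distance directly. That buys you a self-contained argument with no external reference, at the cost of a coordinate calculation; the paper's route is shorter here because the general formula has already been established elsewhere.

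There is, however, a genuine gap. The theorem asserts that $\delta_{\mathcal O}$ realizes the maximum \emph{distance in the $\delta$-rotation process}, meaning the maximum over $\delta$ of the minimum over all pairs of lines. You have only maximized the distance between \emph{neighboring} lines. To finish you must check that at $\delta_{\mathcal O}$ every non-neighboring pair is at distance at least $d_{\mathcal O}$ --- otherwise the minimum at $\delta_{\mathcal O}$ would be smaller than $d_{\mathcal O}$, and the global maximum of the min function could occur elsewhere. The paper handles this explicitly: its proof notes that ``all other (non-neighboring) pairwise distances \ldots\ are bigger than $d_{\mathcal O}$'' and defers the details to Section~\ref{glopiOC}, where the five $G_{\mathcal O}$-orbits of edge pairs are listed and the corresponding distance functions plotted. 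You should at least enumerate those orbits (there are five: the neighboring pair, the ``opposite at a vertex'' pair, two skew types across the body, and the antipodal pair at constant distance $2$) and verify that each exceeds $d_{\mathcal O}$ at $\delta_{\mathcal O}$; without this, the argument establishes only the maximum of one particular distance function, not of the compound's radius.
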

\begin{proof} The square of the distance between the rotated neighboring lines is given by the formula (\ref{diab}) which reduces to (\ref{valdodico}) for the octahedron. 

\vskip .2cm
A straightforward but more lengthy verification shows that in the configuration obtained by the $\delta$-rotation through the angle $\delta_\mathcal{O}$ all other 
(non-neighboring) pairwise distances between the tangent lines are bigger than $d_\mathcal{O}$ so the other pairs of corresponding cylinders do not intersect. See Section \ref{glopiOC} for details.
\end{proof}

\vskip .2cm
The corresponding radius of touching cylinders, see formula (\ref{radidi}), is 
\begin{equation}\label{mradiusCO}r_\mathcal{O}=\frac{\sqrt{3}-1}{1+2\sqrt{2}-\sqrt{3}}=7-\sqrt{2}-4\sqrt{3}+3\sqrt{6}\approx 0.3492\ .\end{equation}
The resulting compound, at the angle $\delta_\mathcal{O}$, of the twelve cylinders of radius $r_\mathcal{O}$ is shown on Figures \ref{OC-1} and \ref{OC-2}.

\vskip .2cm
\noindent{\bf Remark.} \emph{Let $C_{12}$ be the configuration of twelve vertical equidistant lines tangent to the unit sphere. We denote by the same symbol 
$C_{12}$  the corresponding configuration of touching vertical cylinders. In \cite {OS} we have proved that the configuration $C_{12}$ is unlockable along a curve, 
which keeps the $\mathbb{D}_6$ symmetry of the configuration, in the moduli space. 
It is interesting to know if the maximal point on this curve beats the radius (\ref{mradiusCO}) of cylinders.}

\newpage
\begin{figure}[H]
\vspace{-.2cm} 
\centering
\includegraphics[scale=0.415]{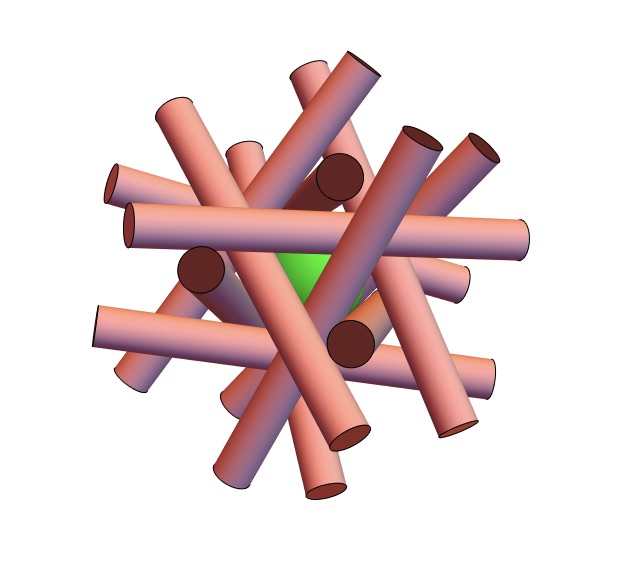}
\caption{Octahedron/cube maximal configuration of cylinders, view from a vertex of the cube}
\label{OC-1}
\end{figure}
\begin{figure}[H]
\centering
\includegraphics[scale=0.395]{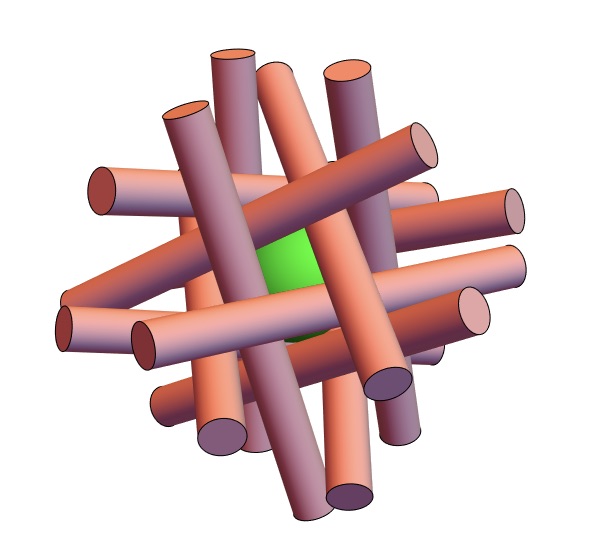}
\caption{Octahedron/cube maximal configuration of cylinders, view from a vertex of the octahedron}
\label{OC-2}
\end{figure}

\filbreak
\subsubsection{Pair $\mathcal{I}$\vspace{.1cm}}\label{PairI}
For the pair icosahedron/dodecahedron, let the icosahedron correspond to $\delta=0$. Then the maximum of the square of
distances between the neighboring edges is achieved at the angle $\delta_{\mathcal{I}}$ such that 
$$\tan (\delta_\mathcal{I})=\left(\frac{6}{5+\sqrt{5}}\right)^{1/4}$$
(approximately, $\delta_\mathcal{I}\simeq
0.24255 \pi$).  The square of the maximal distance between the neighboring tangent lines is the value of the function
\begin{equation}\label{valdodico2}-\,\frac{4\sin^2 (2\delta)}{\left(\cos (2\delta) -(4+\sqrt{5})\right)\cdot
\left(\cos (2\delta)-(1-2\sqrt{5})\tau^3\right)}
\end{equation}
at $\delta=\delta_\mathcal{I}$. Here $\tau$ is the golden ratio,
$$\tau=\frac{1+\sqrt{5}}{2}\ .$$

\noindent{\bf Remark.} \emph{Compare the golden integers, appearing in the denominator of  (\ref{valdodico2}) with the numbers $\beta_{11}$ and
$\gamma_{-19}$, see  Section 4.1 in \cite{OS-C6},  
which enter the formulas for the differentials of distances around the 
configuration $C_{6}\left(  \varphi_{\mathfrak{m}},\delta_{\mathfrak{m}},\varkappa_{\mathfrak{m}}\right)$.
This coincidence hints at some hidden relation between the configuration $C_{6}\left(  \varphi_{\mathfrak{m}},\delta_{\mathfrak{m}},\varkappa_{\mathfrak{m}}\right)$ and the exceptional Platonic bodies (the icosahedron and dodecahedron).}

\vskip .2cm
The value of the function (\ref{valdodico2}) at $\delta=\delta_\mathcal{I}$ is 
\begin{equation}\label{valdodico3}\begin{array}{rcl}d_\mathcal{I}^2&=&
\displaystyle{ \frac{2\sqrt{6\left(5+\sqrt{5}\right)}}{75+33\sqrt{5}+12\sqrt{6\left(5+\sqrt{5}\right)}+5\sqrt{30\left(5+\sqrt{5}\right)}} }\\[2.4em]
&=&
\displaystyle{ \frac{9-\sqrt{5}-\sqrt{6\left(5+\sqrt{5}\right)}}{4}=\left(
\frac{5^{1/4}\sqrt{3\tau}}{2\tau}-\frac{\tau}{2}
\right)^2\simeq 0.0437
\ .}\end{array}
\end{equation}
The corresponding radius of touching neighboring cylinders is 
\begin{equation}\label{valdodico4}r_\mathcal{I}=11-5\sqrt{5}+\sqrt{3\left( 85-38\sqrt{5}\right)}\approx 0.1167\ .\end{equation}
This compound of cylinders is shown on Figure \ref{IntersectingCylindersIO}. 

\vskip .2cm
In contrast to the pair $\mathcal{O}$, there are non-neighboring tangent lines 
at a smaller (than for the neighboring tangent lines) distance; see the overlapping blue cylinders on Figure \ref{IntersectingCylindersIO}. 
The final result concerning the maximal point for the pair $\mathcal{I}$ is contained in Subsection \ref{glpicID}.
\begin{figure}[H]
\vspace{.2cm} 
\centering
\includegraphics[scale=0.46]{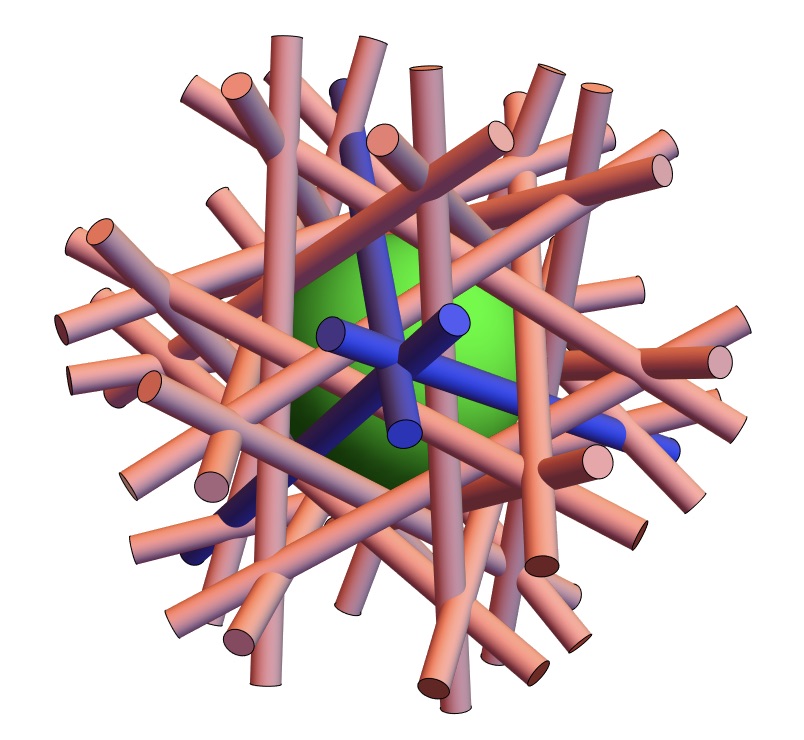}
\caption{Configuration of cylinders of radius $r_\mathcal{I}$ at $\delta=\delta_\mathcal{I}$}
\label{IntersectingCylindersIO}
\end{figure}

\vskip .2cm
The square of the minimal distance at the angle $\delta_{\mathcal{I}}$ is approximately $0.00291762$ which corresponds to a much smaller cylinder radius, equal approximately to
$0.0277571$, compare with (\ref{valdodico3}) and (\ref{valdodico4}). 

\vskip .2cm
A more detailed analysis, given in Subsection \ref{glpicID}, of the $\delta$-rotation process, shows that the actual radius 
of cylinders at the maximal point is not so much smaller than the one given by the expression (\ref{valdodico4}); the maximal
radius is approximately $0.115558$.

\paragraph{Remark.} It is interesting to note that the values $d_\mathcal{I}^2$ and $r_\mathcal{I}$, see formulas (\ref{valdodico3}) and (\ref{valdodico4}), define the same extension of the field of rational numbers, $\mathbb{Q}[d_\mathcal{I}^2]\cong \mathbb{Q}[r_\mathcal{I}]$. Indeed, 
$$r_\mathcal{I}=12 (d_\mathcal{I}^2)^3-62(d_\mathcal{I}^2)^2+74d_\mathcal{I}^2-3\ .$$
It is not so for the pair $\mathcal{O}$, see formulas (\ref{valdodicob}) and (\ref{mradiusCO}).

\subsection{Comment about calculations in Subsection \ref{neitali}\vspace{.1cm}}
To find the values $\delta_\mathcal{O}$ and $\delta_\mathcal{I}$, we place a vertex of a Platonic body $\mathcal{P}$ at the
point $(0,0,h)$, $h>1$, on the $\mathbf{z}$-axis. We draw the line $\iota$ in the plane $\mathbf{x}\mathbf{z}$, passing through the point $(0,0,h)$
and touching the unit sphere at the point with a positive $\mathbf{x}$-coordinate. Then the tangency point is
$$p:=\left(\,\frac{\sqrt{h^2-1}}{h},0,\frac{1}{h}\,\right)\ .$$ Let $\ell_1$ be the line, tangent to the unit sphere at the point $p$ and forming the angle
$\delta$ with the line $\iota$. Let $\ell_2$ be the line obtained from $\ell_1$ by a rotation around the $\mathbf{z}$-axis by the
angle $2\pi/\mathsf{k}$ where $\mathsf{k}$ is the number of faces of $\mathcal{P}$ sharing a common vertex. The distance to be explored is the distance between lines $\ell_1$ and $\ell_2$. The tangency points of $\ell_1$ and $\ell_2$ are at the same latitude
$$\varphi=\arccos \left(\frac{\sqrt{h^2-1}}{h}\right)\ .$$
Therefore the square of the distance between the lines, obtained from the lines $\ell_1$ and $\ell_2$ in the $\delta$-rotation process, given by the formula (28) from Section 6.1 in \cite{OS}, is
\begin{equation}\label{diab}d^{2}=\frac{4\sin(\alpha)^{2}(1-S^{2})^{2}T^{2}}{(S^{2}+T^{2}
)(1-\sin(\alpha)^{2}S^{2}+\cos(\alpha)^{2}T^{2})}\ .\end{equation}
Here $S=\sin (\varphi)=1/h$, $\alpha=  \pi/\mathsf{k}$ and $T=\tan (\delta)$.

\vskip .2cm
Let $r_m$ be the midraius (the radius of the sphere which touches the middle of each edge of $\mathcal{P}$) and $r_c$ the radius of a circumscribed sphere (the one that passes through all vertices of $\mathcal{P}$).
We want the sphere touching the middles of edges of $\mathcal{P}$ to be unit. Then $$h=r_c/r_m\ .$$

\vskip .2cm
The well-known values of $r_c$ and $r_m$ for the Platonic bodies of edge length $a$ are collected in the following
table; the last column gives the value of $h$.

\begin{center}
\begin{tabular}{c| c| c| c}
& $r_c$ & $r_m$ & $h$ \\[1em]
\hline
&&&\\[-.8em]
tetrahedron& $\displaystyle{\frac{\sqrt{3}}{\sqrt{8}}\,a}$ & $\displaystyle{ \frac{1}{\sqrt{8}}\, a }$& $\sqrt{3}$ \\[1em]
\hline
&&&\\[-.8em]
octahedron& $\displaystyle{ \frac{1}{\sqrt{2}}\, a }$& $\displaystyle{ \frac{1}{2}\, a }$ & $\sqrt{2}$ \\[1em]
\hline
&&&\\[-.8em]
cube& $\displaystyle{ \frac{\sqrt{3}}{2}\, a}$ & $\displaystyle{ \frac{1}{\sqrt{2}}\, a }$ &$\displaystyle{ \frac{\sqrt{3}}{\sqrt{2}} }$ \\[1em]
\hline
&&&\\[-.8em]
icosahedron& $\displaystyle{ \frac{5^{1/4}\,\tau^{1/2}}{2}\, a }$& $\displaystyle{ \frac{\tau}{2}\, a }$ & $\displaystyle{ \frac{5^{1/4}}{\tau^{1/2}} }$\\[1em]
\hline
&&&\\[-.8em]
dodecahedron& $\displaystyle{ \frac{\sqrt{3}\,\tau}{2}\, a }$ & $\displaystyle{ \frac{\tau^2}{2}\, a }$ & $\displaystyle{ \frac{\sqrt{3}}{\tau} }$
\end{tabular}
\end{center}

\paragraph{Remark.} \emph{The construction of this section associates (see denominators of the formulas (\ref{forTT}), (\ref{valdodico}) and (\ref{valdodico2}))  pairs of numbers to each exceptional finite subgroup of $SO(3)$ (the proper symmetry group of either of dual Platonic bodies), or, via the McKay correspondence, to exceptional Lie groups $E_6$, $E_7$ and $E_8$. These pairs are:}
\begin{itemize}
\item[]  \emph{2 and -2 for $E_6$;}

\item[]  \emph{3 and -5 for $E_7$;}

\item[]  \emph{two golden integers, of norms 11 and -19, for $E_8$.}
\end{itemize}
\noindent  \emph{It would be interesting to have a uniform formula giving these values in terms of some Lie groups data.}

\section{Platonic compounds: global picture\vspace{.2cm}}\label{placo}
We will analyze different pairwise distances between the tangent lines in the $\delta$-rotation process. This question is equivalent to the following one. Let $\mathcal{P}$ be a Platonic body. 
The proper symmetry group $G_\mathcal{P}$ acts on the set of unordered pairs of distinct edges of $\mathcal{P}$. The set of pairwise distances is described by the orbits of this action. 

\vskip .2cm
The further analysis of the $\delta$-rotation process for the pairs $\mathcal{O}$ and $\mathcal{I}$ is found in the next two subsections.

\subsection{Octahedron/Cube\vspace{.1cm}}\label{glopiOC}
We recall that the octahedron corresponds to $\delta=0$ and the cube corresponds to $\delta=\pi/2$; during the $\delta$-rotation process the edges 
are rotated counterclockwise.

\vskip .2cm
We fix an edge of the cube; it is shown in purple on Figure \ref{Distances, OC}. The set of orbits of the action of the group $G_\mathcal{O}$ 
on the set of unordered pairs of distinct edges has cardinality five, shown in red, green, blue, black and yellow on Figure \ref{Distances, OC}.
\begin{figure}[H]
\centering
\includegraphics[scale=0.2]{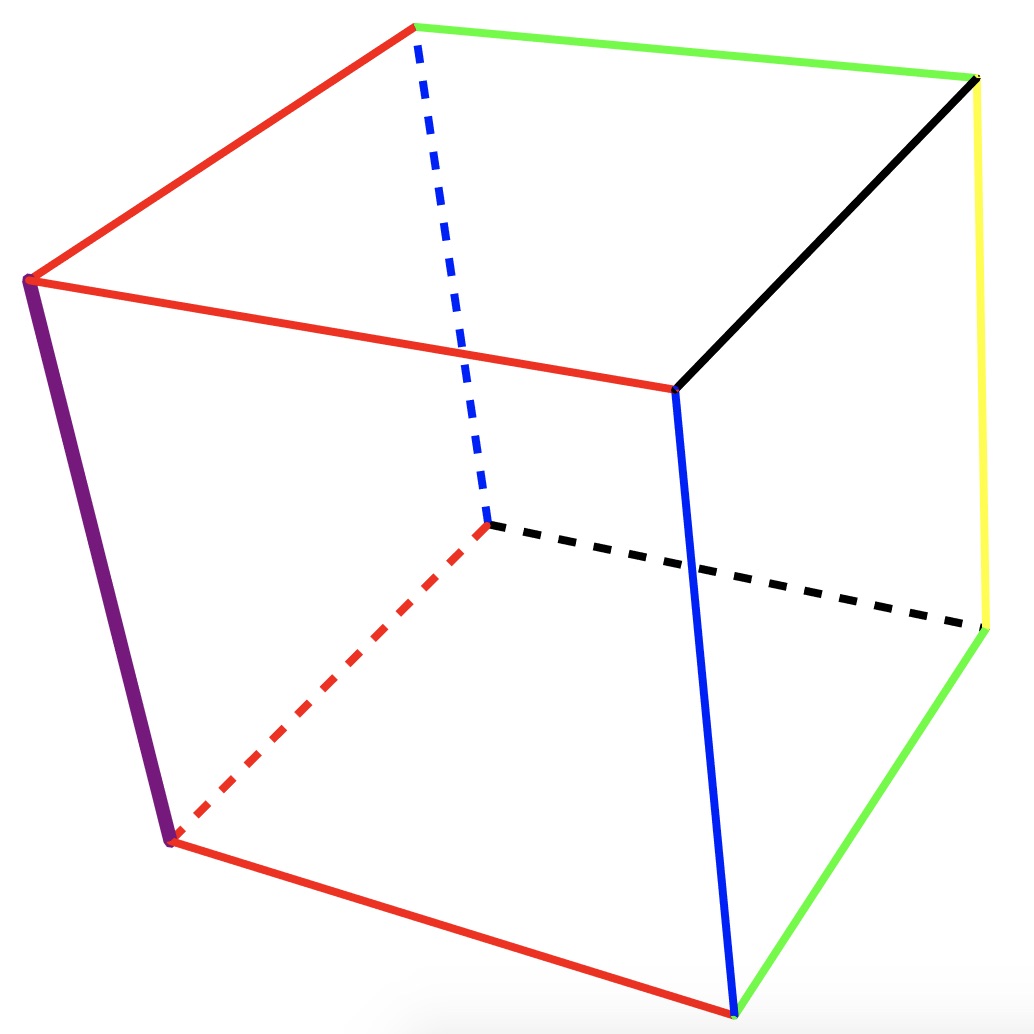}
\vskip .2cm
\caption{Pairwise distances for the pair $\mathcal{O}$}
\label{Distances, OC}
\end{figure}
The graphs of the squares of four distances during the $\delta$-rotation process are shown on Figure \ref{FiveDistancesOC}; $\delta$ varies from $0$ to $\pi/2$.
The colours of the graphs correspond to the colours of the rotated red, green, blue and black edges. 
The remaining fifth distance is the distance between the purple and the yellow edges; it keeps the constant value 2 in the $\delta$-rotation process.

\vskip .2cm
The graphs on Figure \ref{FiveDistancesOC} show that only two out of five distances are relevant for the minimum; these are the graphs shown in red and green on Figure \ref{FiveDistancesOC}. The red graph is the graph of the function (\ref{valdodico}). The green graph is the graph of the function
$$ \frac{ \left(4 \cos (2 \delta) + \sqrt{2} \sin (2 \delta)\right)^2}{6 \cos (\delta)^4 + 8 \sqrt{2} \cos (\delta)^3 \sin (\delta) + 8 \sin (\delta)^4}\ .$$
\begin{figure}[H]
\vspace{-.4cm} 
\centering
$\!\!\!\!\!$\includegraphics[scale=0.47]{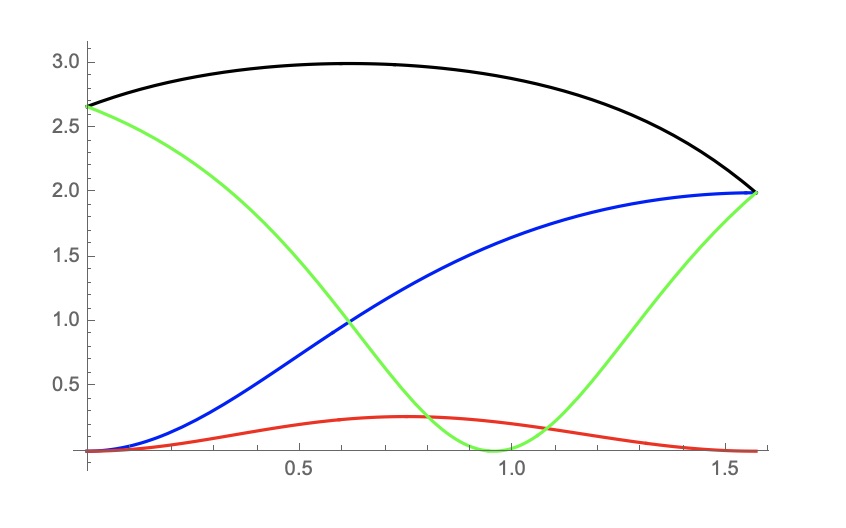}
\vskip -.4cm
\caption{Graphs of squares of four distances for the pair $\mathcal{O}$}
\label{FiveDistancesOC}
\end{figure}
The graph of the minimum of the squares of the distances in the $\delta$-rotation process is shown on Figure \ref{MinDistOC}. The maximum of the graph corresponds to the value $\delta_\mathcal{O}$ given by (\ref{valdodico0}). 
\begin{figure}[H]
\vspace{.2cm} \centering
$\!\!\!\!\!\!\!$\includegraphics[scale=0.51]{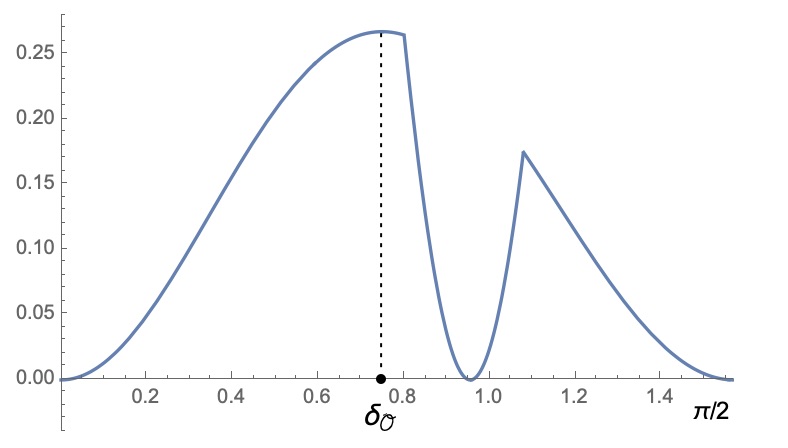}
\vskip -.2cm
\caption{Graph of the square of the minimal distance for the pair $\mathcal{O}$}
\label{MinDistOC}
\end{figure}
The first corner point on Figure \ref{MinDistOC} occurs at $$\delta\simeq 0.800811\ .$$ The value of the square of the minimal distance at this point is $$\simeq 0.26534\ ,$$ compare to the value (\ref{valdodicob}).

\subsection{Icosahedron/Dodecahedron\vspace{.1cm}}\label{glpicID}
In the $\delta$-rotation process we let the icosahedron correspond to $\delta=0$ and the dodecahedron correspond to $\delta=\pi/2$; during the $\delta$-rotation process the edges 
are rotated counterclockwise.

\vskip .2cm
The set of orbits of the action of the group $G_\mathcal{I}$ on the set of unordered pairs of distinct edges has cardinality eleven. We illustrate the orbits on Figure \ref{Distances, ID}. We fix an edge of the dodecahedron. 
It is marked by a red disk on the dodecahedron net on Figure \ref{Distances, ID}. The eleven different distances during the $\delta$-rotation process 
correspond to the edges on the dodecahedron net with the marks from 1 to 11. 
\begin{figure}[H]
\vspace{.2cm} \centering
\includegraphics[scale=0.4]{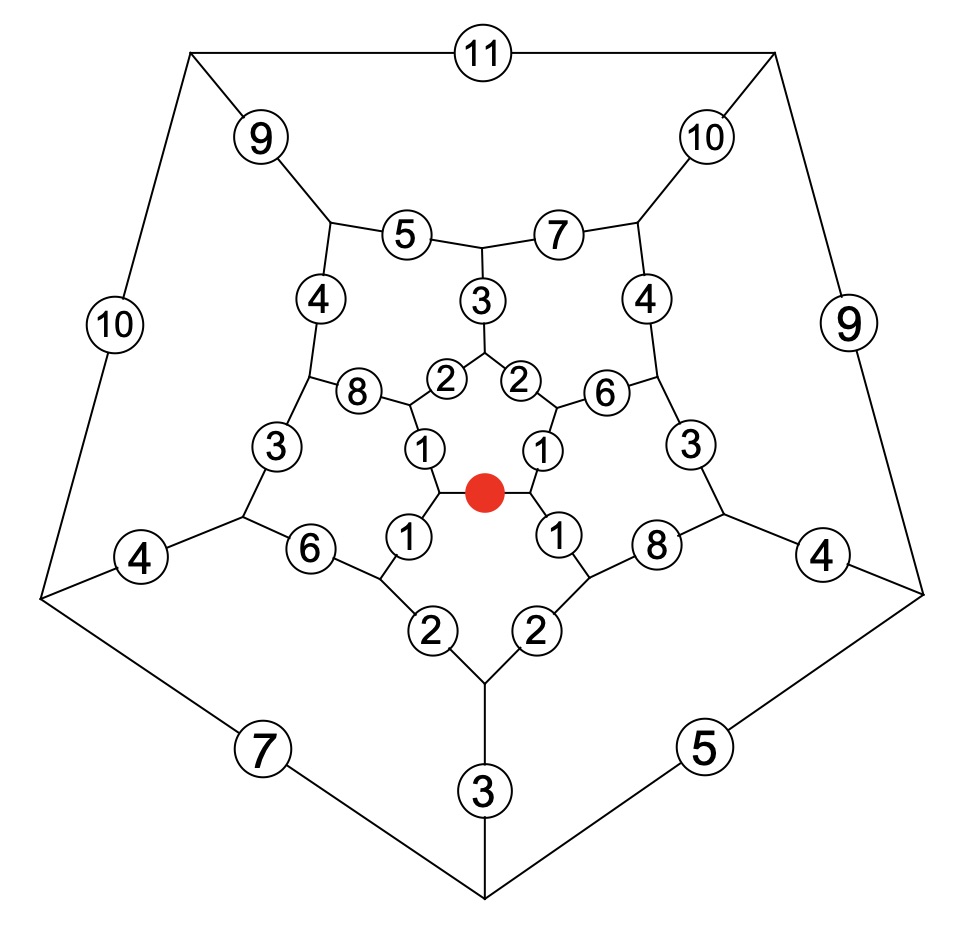}
\vskip -.2cm
\caption{Pairwise distances for the pair $\mathcal{I}$}
\label{Distances, ID}
\end{figure}

\vskip .2cm
The graphs of the ten distances during the $\delta$-rotation process, are shown on Figure \ref{TenDistances, ID}; $\delta$ varies from $0$ to $\pi/2$. 
The remaining eleventh distance is between opposite edges; it keeps the constant value 2 during the $\delta$-rotation process.

\vskip .2cm
These graphs on Figure \ref{TenDistances, ID} show that only four out of eleven distances are relevant for the minimum. 
\begin{figure}[H]
\vspace{-.4cm} \centering
\includegraphics[scale=0.42]{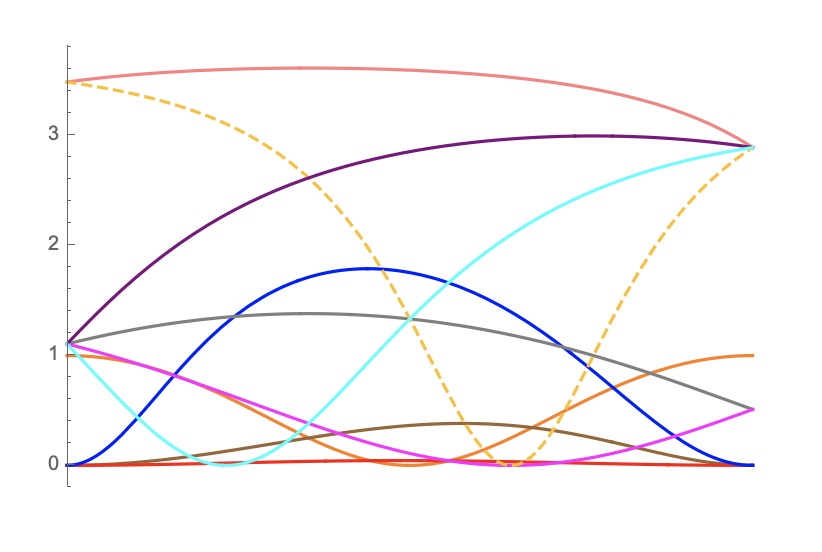}
\vskip -.3cm
\caption{Graphs of squares of ten distances for the pair $\mathcal{I}$}
\label{TenDistances, ID}
\end{figure}
\vskip -.2cm
The graphs of relevant distances are shown on Figure \ref{RelevantDistancesID}.
\begin{figure}[H]
\centering
\includegraphics[scale=0.42]{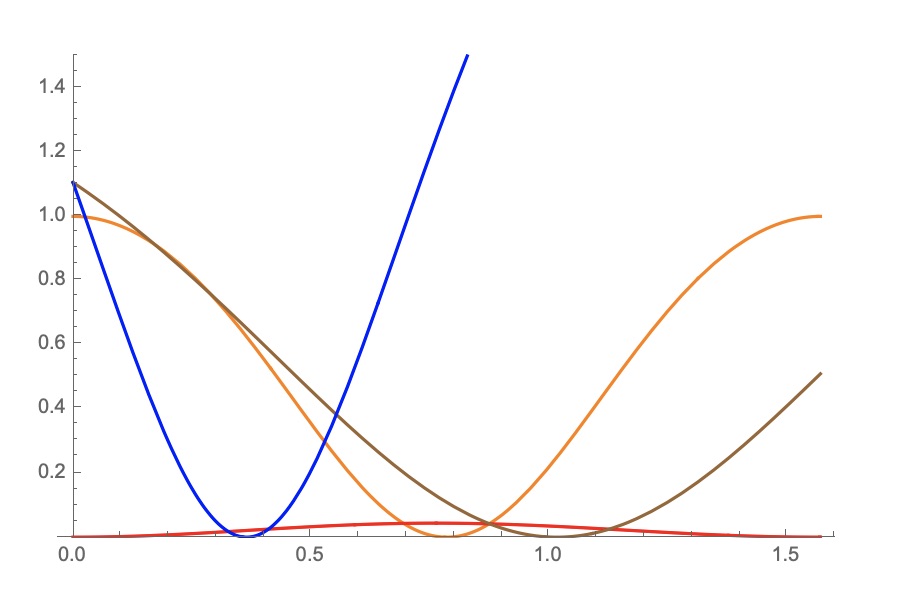}
\vskip -.2cm
\caption{Relevant distances for the pair $\mathcal{I}$}
\label{RelevantDistancesID}
\end{figure}

We have explicitly calculated the distances during the $\delta$-rotation process with the help of Mathematica \cite{W}.

\vskip .2cm
The red graph corresponds to the distance marked by 1 on Figure \ref{Distances, ID}. This function is given by the formula (\ref{valdodico2}). 

\vskip .2cm
The blue graph corresponds to the distance marked by 5 on Figure \ref{Distances, ID}. This function is
\begin{equation}\label{secora0}\frac{8\left( \sqrt{5}\sin (2\delta)-2\cos (2\delta)\right)^2}{21+4\sqrt{5}+4\sqrt{5}\cos (2\delta)-\cos (4\delta)+8\sin (2\delta)-4\sqrt{5}\sin (4\delta) }\ .\end{equation}

The orange line corresponds to the distance marked by 3 on Figure \ref{Distances, ID}. This function is
\begin{equation}\label{secora}\frac{4\cos^2 (2\delta)}{3+\cos^2 (2\delta)}\ .\end{equation}

The brown graph corresponds to the distance marked by 8 on Figure \ref{Distances, ID}. This function is
\begin{equation}\label{thibr1}\frac{8(2\cos (2\delta)+\sin (2\delta))^2}{25+8\sqrt{5}+4\tau^3(2\cos (2\delta)-\sin (2\delta))+3\cos (4\delta)+4\sin (4\delta) }\ .
\end{equation}

The distance marked by 9 is also important in the analysis. Its graph is dashed on Figure \ref{TenDistances, ID}. This function is 
\begin{equation}\label{thibr2}\frac{8\tau (2\cos (2\delta)+\sin (2\delta))^2}{\tau^3 (25-8\sqrt{5})+4(2\cos (2\delta)-\sin (2\delta))+\tau^3(3\cos (4\delta)+4\sin (4\delta)) }\ .\end{equation}
The graph of this function goes above 
the graph of the $min$ function (the minimum of the squares of the distances) 
everywhere except  the point  
$$\frac{\pi-\arctan (2)}{2}\ .$$
At this point the functions (\ref{thibr1}) and (\ref{thibr2}) vanish simultaneously and this complicates the corresponding minimal configuration, see 
details in Subsection \ref{minimaID}.

\vskip .2cm
The graph of the minimum of the squares of the distances in the $\delta$-rotation process is shown on Figure \ref{PlotMinID}. At $\delta_\mathcal{I}$ ($\delta_\mathcal{I}$ is slightly greater than $\delta_{\mathrm{max}}$) the $min$ function is given by (\ref{secora}) while the value (\ref{valdodico3}) is the value of the function (\ref{valdodico2}). For this reason the analysis of the pair $\mathcal{I}$ has to be continued.  
\begin{figure}[H]
\centering
\includegraphics[scale=0.3]{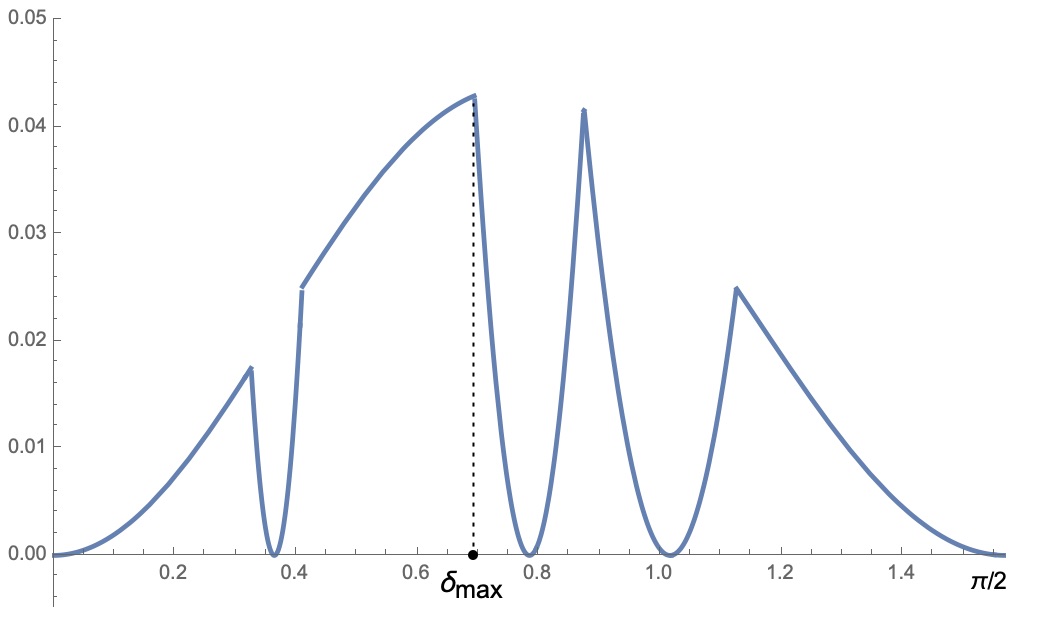}
\vskip .2cm
\caption{Graph of the square of the minimal distance for the pair $\mathcal{I}$}
\label{PlotMinID}
\end{figure}
The seemingly peak, for $\delta$ between $0.874$ and $0.876$, on Figure \ref{PlotMinID}, is illusive. The fine structure of the graph 
around these values of $\delta$ is shown on Figure \ref{FineStructure}.
\begin{figure}[H]
\centering
\includegraphics[scale=0.4]{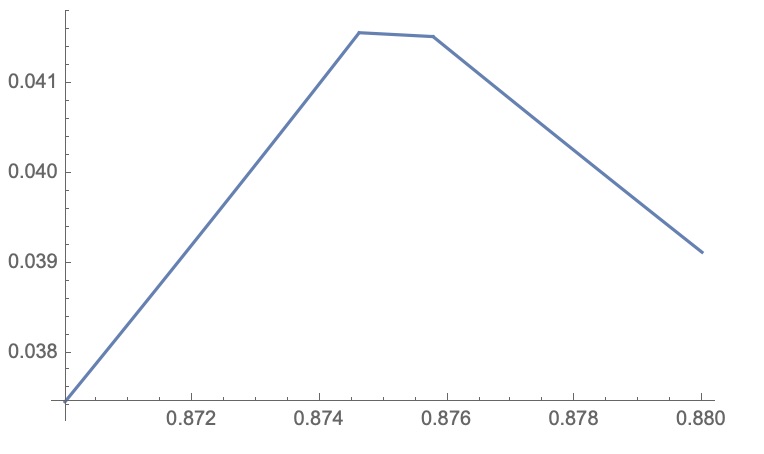}
\vskip .2cm
\caption{Fine structure}
\label{FineStructure}
\end{figure}

\vskip .2cm
The resulting maximal compound, at the angle $\delta_{\text{max}}$, of thirty cylinders is shown on Figures \ref{MaxID-ConfigViewFrom5-axis} and \ref{MaxID-ConfigViewFrom3-axis}.
\newpage
\begin{figure}[H]
\vspace{-3cm} 
\centering
\includegraphics[scale=0.34]{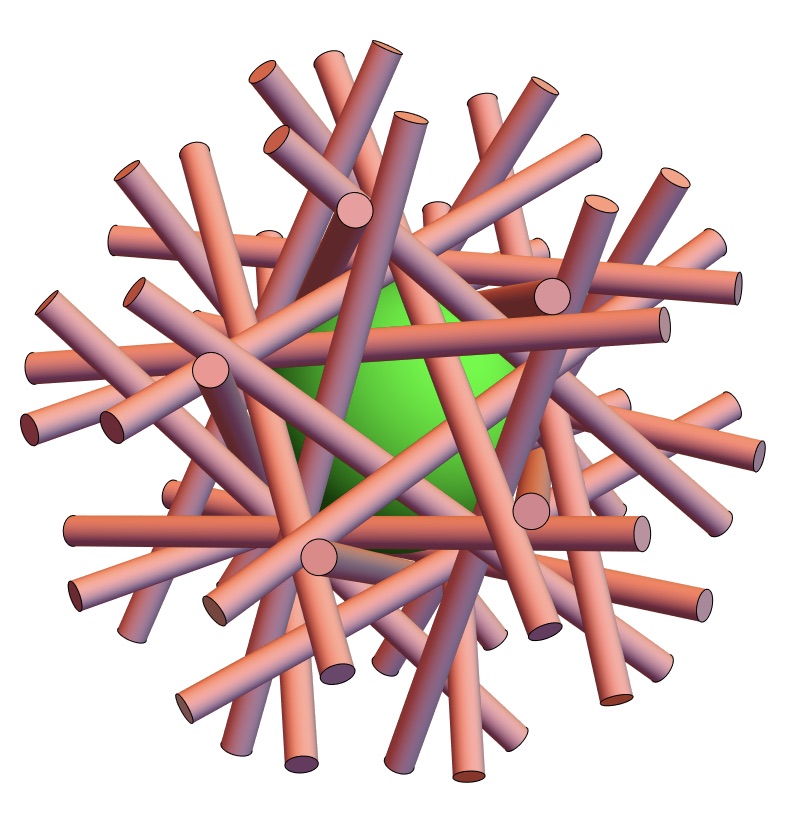}
\caption{Maximal configuration, view from the tip of a 5-fold axis}
\label{MaxID-ConfigViewFrom5-axis}
\end{figure}
\begin{figure}[H]
\vspace{-.4cm} 
\centering
\includegraphics[scale=0.36]{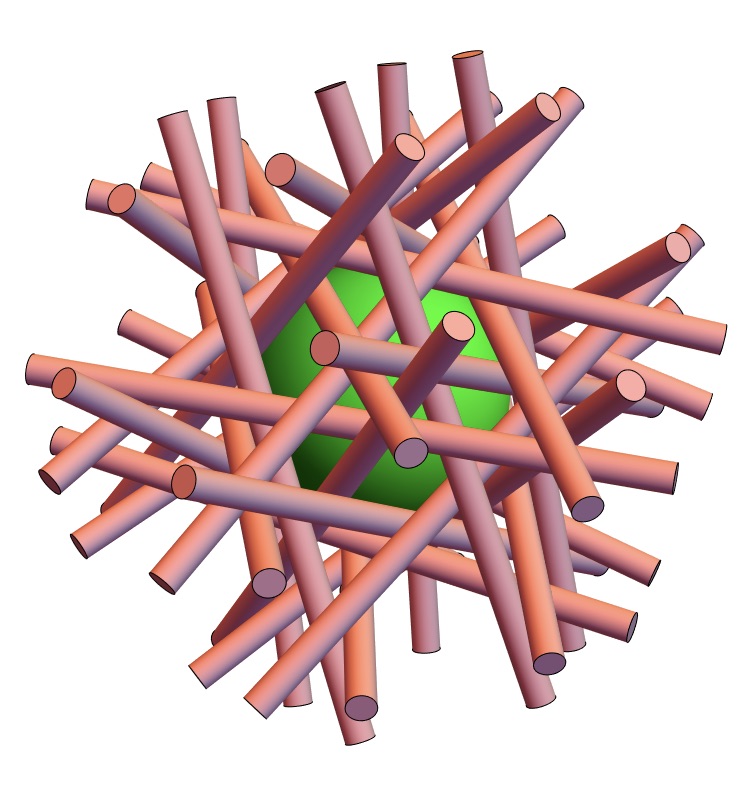}
\caption{Maximal configuration, view from the tip of a 3-fold axis}
\label{MaxID-ConfigViewFrom3-axis}
\end{figure}

We summarize the results about the maximal distance in the $\delta$-rotation process for the pair $\mathcal{I}$.

\begin{theorem} The maximum distance in the $\delta$-rotation process for the pair $\mathcal{I}$ is achieved at the angle 
\begin{equation}\label{valdodicoI}\delta_{\text{max}}=\arctan\left(\sqrt{t_0}\right)\ ,\end{equation}
where $$t_0\simeq 0.694356$$ is a root of the polynomial
$$5t^6-80t^5+190t^3-4t^2-84t+9\ ;$$ 
all roots of this polynomial are real.
Approximately, 
$$\delta_{\text{max}}\simeq 0.694707\ .$$

The value of the square of the minimal distance 
at $\delta_{\text{max}}$ is approximately 
$$d_{\text{max}}\simeq 0.0429216\ ,$$ 
compare with (\ref{valdodico3}). The corresponding radius of cylinders is approximately $$r_{\text{max}}\simeq0.115558\ ,$$ compare with (\ref{valdodico4}).
\end{theorem}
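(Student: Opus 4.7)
The plan is to exploit the fact, visible from Figures \ref{RelevantDistancesID} and \ref{FineStructure}, that the piecewise-smooth function $\delta\mapsto d^2_{\min}(\delta)$ attains its maximum at a corner, where two of the four relevant smooth distance functions cross transversally, one increasing and one decreasing. Concretely, for $\delta$ slightly below $\delta_{\text{max}}$ the minimum is realized by the red curve (distance 1), given by formula (\ref{valdodico2}); immediately past $\delta_{\text{max}}$ it switches to the orange curve (distance 3), given by formula (\ref{secora}). So the first step is to locate $\delta_{\text{max}}$ by solving
\[
-\frac{4\sin^2 (2\delta)}{\left(\cos (2\delta) -(4+\sqrt{5})\right)\left(\cos (2\delta)-(1-2\sqrt{5})\tau^3\right)}\;=\;\frac{4\cos^2 (2\delta)}{3+\cos^2 (2\delta)}\ .
\]

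Second, I would rationalize this equation via the substitution $T=\tan\delta$ and $t=T^2$, using $\sin^2(2\delta)=4t/(1+t)^2$ and $\cos^2(2\delta)=(1-t)^2/(1+t)^2$. After clearing denominators, expanding the golden-ratio expressions with $\tau^2=\tau+1$, and peeling off the trivial factors coming from $t=0$ (the trivial configuration) and from the common factor $(1+t)$, what remains should be (up to a constant) the sextic $P(t)=5t^6-80t^5+190t^3-4t^2-84t+9$. Then I would verify numerically that $P$ has exactly six real roots (for instance by a Sturm sequence computation, or simply by evaluating $P$ at several test points and counting sign changes) and identify $t_0\simeq 0.694356$ as the unique root lying in the geometrically relevant interval, giving $\delta_{\text{max}}=\arctan\sqrt{t_0}\simeq 0.694707$.

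Third, having pinned down $\delta_{\text{max}}$, I would compute $d_{\text{max}}^2$ by simply substituting $\delta_{\text{max}}$ into (\ref{secora}); this yields the stated approximate value $0.0429216$. The radius $r_{\text{max}}$ follows from (\ref{radidi}) applied to $d=\sqrt{d_{\text{max}}^2}$, giving $r_{\text{max}}\simeq 0.115558$.

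Finally, it still has to be checked that $\delta_{\text{max}}$ is indeed the global maximum of $d^2_{\min}$ and not merely a local one. For this I would, using the explicit formulas (\ref{valdodico2}), (\ref{secora0}), (\ref{secora}), (\ref{thibr1}) and (\ref{thibr2}), verify at $\delta=\delta_{\text{max}}$ that each of the remaining distance functions (the blue curve \ref{secora0}, the brown curve \ref{thibr1}, and the dashed curve \ref{thibr2}) evaluates to a number strictly larger than $d_{\text{max}}^2$, and also check that on the two adjacent monotonic branches of the min function — the increasing red branch and the decreasing orange branch — no other crossing with a third curve produces a higher corner. The main obstacle is this last step: one must rule out, with a rigorous estimate rather than a picture, that another pair of curves among the eleven orbit-distances crosses at a higher value, and in particular handle the delicate region around $(\pi-\arctan 2)/2$ where (\ref{thibr1}) and (\ref{thibr2}) vanish simultaneously (see Subsection \ref{minimaID}). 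A clean way to do this is to bound each of the other ten squared-distance functions from below by $d_{\text{max}}^2+\varepsilon$ on the closed interval containing $\delta_{\text{max}}$, which reduces, after the substitution $t=\tan^2\delta$, to checking that certain explicit polynomial inequalities have no roots in the corresponding interval — a finite and in principle mechanical task.
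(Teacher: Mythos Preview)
Your overall plan coincides with what the paper actually does: compute the eleven orbit--distance functions (the paper does this with Mathematica), isolate the four that are relevant for the pointwise minimum (Figure~\ref{RelevantDistancesID}), and locate the global maximum of that minimum. You have also correctly identified that $\delta_{\text{max}}$ is the crossing point of the red curve (\ref{valdodico2}) with the orange curve (\ref{secora}); numerically both equal $\simeq 0.0429$ there while the blue and brown curves sit near $0.73$ and $0.47$.

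There is, however, a real error in your second step. Equating (\ref{valdodico2}) and (\ref{secora}) and clearing denominators does \emph{not} give a sextic in $t=\tan^2\delta$ with integer coefficients. With $c=\cos(2\delta)$ the equation reduces to the \emph{cubic}
\[
2(2+\sqrt{5})\,c^{3}-(49+20\sqrt{5})\,c^{2}+3=0,
\]
and after the substitution $c=(1-t)/(1+t)$ one obtains (up to a unit)
\[
(25+11\sqrt{5})\,t^{3}-(35+13\sqrt{5})\,t^{2}-(23+7\sqrt{5})\,t+(21+9\sqrt{5})=0,
\]
whose coefficients lie in $\mathbb{Q}(\sqrt{5})$ and cannot be made rational by ``expanding with $\tau^{2}=\tau+1$'' or by peeling off factors. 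The sextic $5t^{6}-80t^{5}+190t^{3}-4t^{2}-84t+9$ of the statement is obtained only after taking the \emph{norm} from $\mathbb{Q}(\sqrt{5})$ to $\mathbb{Q}$, i.e.\ multiplying this cubic by its Galois conjugate (replace $\sqrt{5}$ by $-\sqrt{5}$) and dividing by $4$. This also explains the clause ``all roots of this polynomial are real'': three of the six roots solve the geometric equation red $=$ orange, the other three come from the conjugate cubic and are geometrically spurious, and both cubics happen to have only real roots. So when you ``identify $t_0$ as the unique root in the relevant interval'' you must discard the conjugate roots as well.

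Your final step --- checking that the remaining distance functions exceed $d_{\text{max}}^2$ at $\delta_{\text{max}}$ and that none of the other three local maxima of the $\min$ function beats this value --- is exactly the verification the paper performs computationally.
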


\section{Minima\vspace{.2cm}}\label{mincom}
For completeness, we describe here the critical points corresponding to the local minima during the $\delta$-rotation process.
There are two `trivial' local minima corresponding to the angles $\delta=0$ and $\delta=\pi/2$; these are Platonic bodies themselves. Other local minima happen when continuations of certain non-neighboring edges start to intersect. 

\vskip .2cm
For each minimum we present a picture of intersecting tangent lines. On the plane projection the line which goes below looses 
its color at the intersection, so the color of the intersection corresponds to the line which goes above. 

\subsection{Octahedron/Cube\vspace{.1cm}}
For the pair $\mathcal{O}$ we have only one non-trivial minimum, see Figure \ref{MinDistOC}.
The minimum happens at the angle $$\delta=\arctan \left(\sqrt{2}\right)\ .$$ 

At this angle every tangent line $\ell$ starts to intersect with two other tangent lines, see Figure \ref{Distances, OC}. Therefore the twelve edges split into four triangles.

\vskip .2cm
The resulting figure, shown on Figure \ref{Octahedron-CubeMinimum}, is formed by four triangles whose edge length is $2\sqrt{3}$. 
The triangles are pairwise simply linked. 

\vskip .2cm
One can describe this figure as follows. Choose a three-fold axis of the proper symmetry group $G_{\mathcal{O}}$ of the
octahedron/cube. Draw an equilateral triangle $\mathsf{T}$ in the plane orthogonal to the axis and passing through the origin. Then the figure is obtained by application of the rotations from $G_{\mathcal{O}}$  to the triangle $\mathsf{T}$.  
\begin{figure}[H]
\vspace{-.4cm} 
\centering
\includegraphics[scale=0.44]{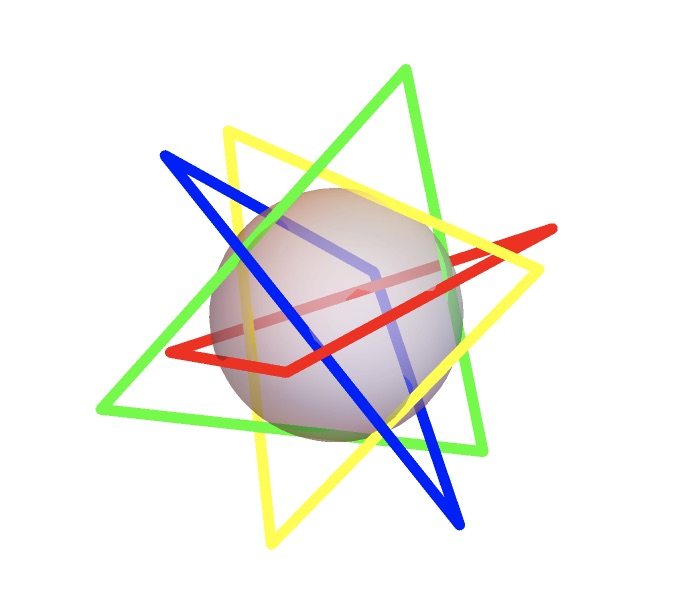}
\vskip -.2cm
\caption{Octahedron/cube minimum}
\label{Octahedron-CubeMinimum}
\end{figure}
The convex hull of the four triangles is the cuboctahedron, of edge length 2, shown on Figure \ref{Octahedron-CubeMinimumConvexHull}. 
\begin{figure}[H]
\centering
\includegraphics[scale=0.42]{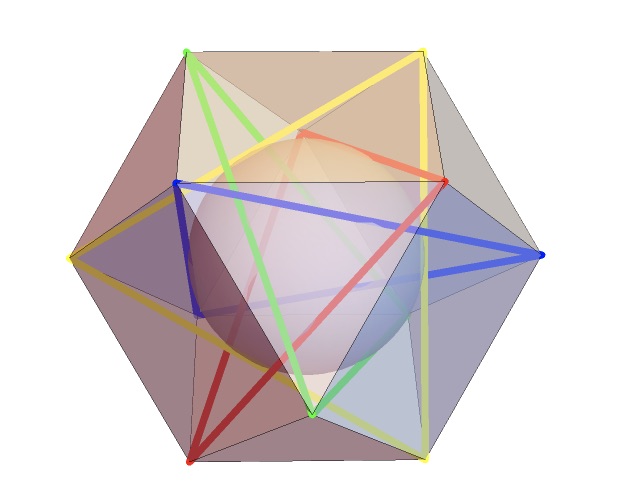}
\caption{Octahedron/cube minimum, convex hull}
\label{Octahedron-CubeMinimumConvexHull}
\end{figure}

\subsection{Icosahedron/Dodecahedron\vspace{.1cm}}\label{minimaID}
As it can be seen on Figure \ref{PlotMinID}, there are three non-trivial values of $\delta$ for which certain tangent lines start to intersect. 

\paragraph{1.} The first minimum happens at the angle $$\delta=\frac{1}{2}\arctan \left(\frac{2}{\sqrt{5}}\right)\ .$$ 

The intersections correspond to distances marked by 5 on Figure \ref{Distances, ID}. Since for every tangent line $\ell$ there are two other tangent lines at the distance marked by 5 from $\ell$, the thirty edges have to split into ten triangles.

\vskip .2cm
The resulting figure is a union of ten triangles
of edge length $2\sqrt{3}$. It is shown on Figure \ref{FirstMinimum}. The triangles are pairwise simply linked. 

\vskip .2cm
One can describe this figure as follows. Choose a three-fold axis of the proper symmetry group $G_{\mathcal{I}}$  of the 
icosahedron/dodecahedron. Draw an equilateral triangle $\mathsf{T}$ in the plane orthogonal to the axis and passing through the origin. Then the figure is obtained by application of the rotations from $G_{\mathcal{I}}$  to the triangle $\mathsf{T}$.  

\begin{figure}[H]
\vspace{-.4cm} \centering
\includegraphics[scale=0.48]{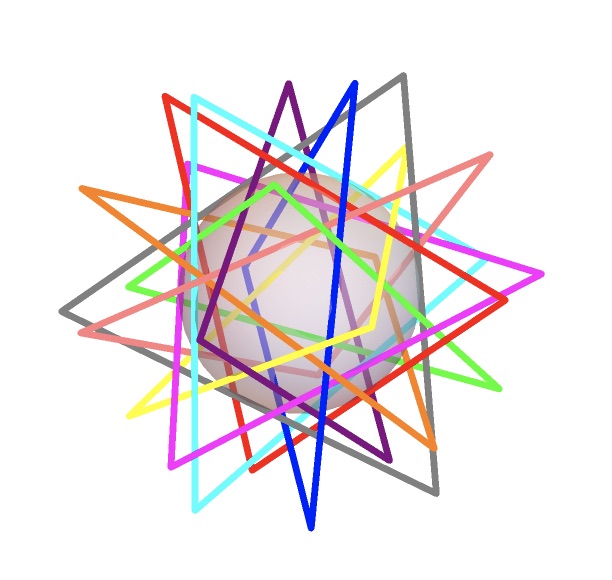}
\vskip .2cm
\caption{First minimum}
\label{FirstMinimum}
\end{figure}
The convex hull of the figure is the icosidodecahedron, of edge length $\sqrt{5}-1$, shown on Figure \ref{fig-Icosidodecahedron}.
\begin{figure}[H]
\vskip -.3cm\centering
\includegraphics[scale=0.3]{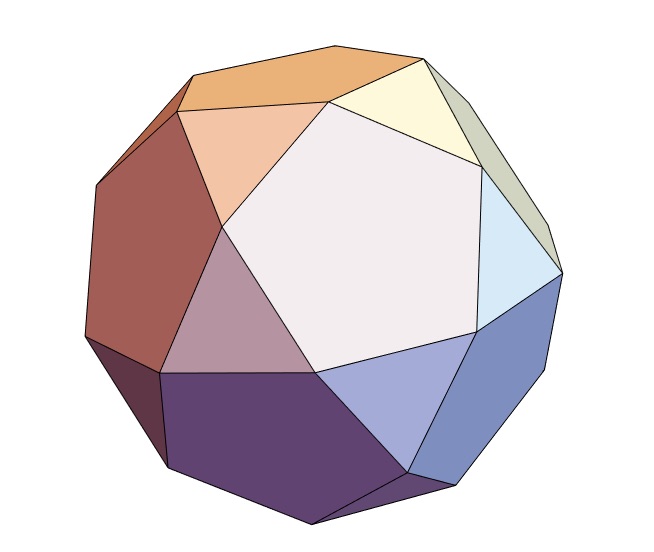}
\vskip .2cm
\caption{Icosidodecahedron}
\label{fig-Icosidodecahedron}
\end{figure}

\paragraph{2.} The second minimum happens at the angle $$\delta=\frac{\pi}{4}\ .$$ 

\vskip .2cm
Here the intersections correspond to distances marked by 3 on Figure \ref{Distances, ID}. Since each tangent line enters into four pairs of type marked by 3, 
the thirty edges have to split into five one-skeletons of the tetrahedron.

\vskip .2cm
The resulting figure is the compound of one-skeletons, of edge length $2\sqrt{2}$, of the five tetrahedra inscribed in the dodecahedron. It is shown on Figure \ref{SecondMinimum}.

\begin{figure}[H]
\vspace{.2cm} \centering
\includegraphics[scale=0.76]{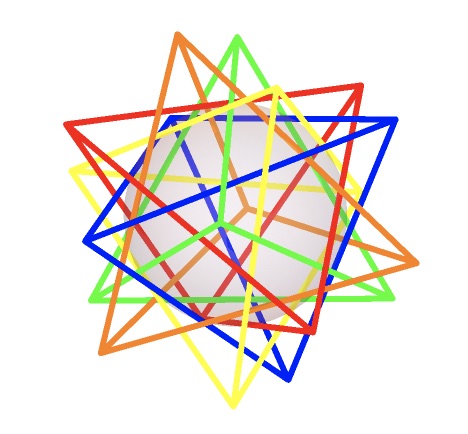}
\vskip .2cm
\caption{Second minimum}
\label{SecondMinimum}
\end{figure}

\vskip .4cm
The pairwise linking of the one-skeletons is shown on Figure \ref{LinkingTetrahedra}. 

\begin{figure}[H]
\centering
\includegraphics[scale=0.52]{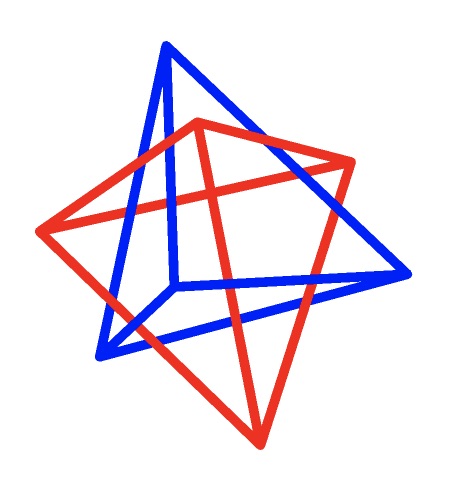}
\vskip .2cm
\caption{Linking of tetrahedra}
\label{LinkingTetrahedra}
\end{figure}

\paragraph{3.} The third minimum happens at the angle \begin{equation}\label{thmin}\delta=\arctan (\tau)\equiv\frac{\pi-\arctan (2)}{2}\ ,\end{equation} 
where $\tau$ is the golden ratio. 

\vskip .2cm
At this value of $\delta$ two distances (\ref{thibr1}) and (\ref{thibr2}) vanish simultaneously. These are distances marked by 8 and 9 on Figure \ref{Distances, ID}.
For each tangent line $\ell$ there are two other lines of type marked by 8 and two more lines marked by 9. So each tangent line intersects four other lines, and the intersections are of two different combinatoric types.

\vskip .2cm
It turns out that at the angle (\ref{thmin}) the edges form pentagonal stars, so the resulting figure is the compound of six pentagonal stars. It is shown on Figure \ref{ThirdMinimum}. 
\begin{figure}[H]
\vspace{-.2cm} \centering
$\!\!\!\!$\includegraphics[scale=0.71]{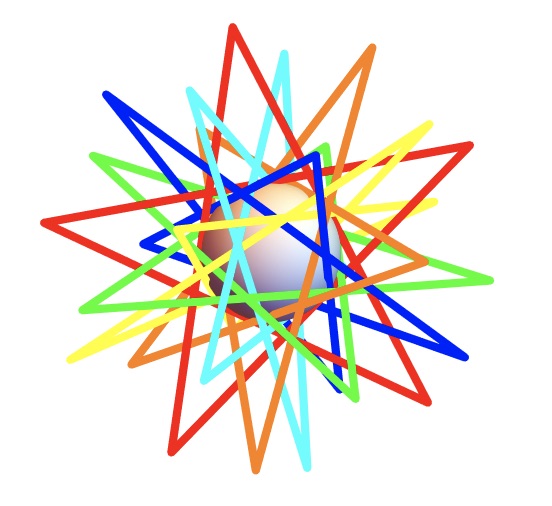}
\vskip .2cm
\caption{Third minimum}
\label{ThirdMinimum}
\end{figure}
One can describe this figure as follows. Choose a five-fold axis of the proper symmetry group $G_{\mathcal{I}}$  of the 
icosahedron/dodecahedron. Draw a regular pentagonal star $\mathsf{P}$ in the plane orthogonal to the axis and passing through the origin. Then the figure is obtained by application of the rotations from $G_{\mathcal{I}}$  to the star $\mathsf{P}$.   

\vskip .3cm
The inner pentagons of the stars form a compound shown on Figure \ref{ThirdMinimumPentagons}. The pentagons are pairwise simply linked.
\begin{figure}[H]
\vspace{.2cm} \centering
\includegraphics[scale=0.72]{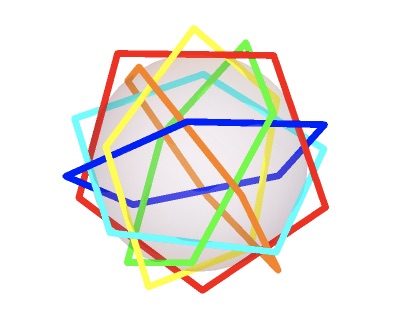}
\vskip .2cm
\caption{Third minimum, inner pentagons}
\label{ThirdMinimumPentagons}
\end{figure}
The convex hull of either ten stars or ten pentagons is again an icosidodecahedron. 

\section{Concluding remarks\vspace{.1cm}}
For the pair $\mathcal{T}$, the square of the minimal distance in the $\delta$-rotation process is a smooth function with a single local maximum.  
Theorem 1 in \cite{OS-O6} asserts that this maximum is a local maximum in the space of all possible configurations of six tangent lines
mod $SO(3)$.

\vskip .2cm
We present several conjectures concerning two other dual pairs. Let $\mathcal{Z}_k$ denote the configuration space, mod $SO(3)$, of $k$ non-overlapping cylinders touching the unit ball.
Let $\mathcal{Z}_k(\geq\mathsf{R})$ denote the subspace of $\mathcal{Z}_k$ consisting of cylinders
of common radius greater than or equal to $\mathsf{R}$.
Let $\mathcal{U}$ be a dual pair of Platonic bodies, either $\mathcal{O}$ or $\mathcal{I}$. We denote by $\mathsf{R}_\mathcal{U}(\delta)$ the radius of touching cylinders corresponding to 
the minimal distance in the $\delta$-rotation process related to the pair $\mathcal{U}$.

\paragraph{Remark.}  \emph{It is interesting to note that the number of pairwise distances is equal to $3{\mathsf{t}}-1$ where $\mathsf{t}$ takes values 1,2 and 4 for, 
respectively $\mathcal{T}$, $\mathcal{O}$ and $\mathcal{I}$. The minimum of the squares of the distances is the minimum of exactly $\mathsf{t}$ smooth functions.
The number of local maxima of the minimum of the squares of the distances is also equal to $\mathsf{t}$.}

\vskip .4cm
We believe that our conjectures can be proved (or disproved) using the techniques of our proofs of the local maximality of the configuration 
$C_{\mathfrak{m}}$ in \cite{OS-C6} and of the configuration $O_6$ in \cite{OS-O6}.

\subsubsection*{Pair octahedron/cube}
Let $\delta_{\mathcal{O},1}$ and $\delta_{\mathcal{O},2}$ be the values of the  angle $\delta$ corresponding to two local maxima of the function  
$\mathsf{R}_\mathcal{O}(\delta)$, see Figure \ref{MinDistOC}. Here  $\delta_{\mathcal{O},1}=\delta_{\mathcal{O}}$. In the $\delta$-rotation process  
for the pair $\mathcal{O}$, there are three distinguished values of $\delta$ for $0<\delta<\pi_2$: 
$\delta_{\mathcal{O},1}$, $\delta_{\mathcal{O},2}$ and one more corner point.

\begin{itemize}
\item {\bf Conjecture i$_\mathcal{O}$, weak form.} The configurations of twelve cylinders of radius $\mathsf{R}_\mathcal{O}(\delta_{\mathcal{O},j})$, $j=1,2$, 
are local maxima in the space $\!\mathcal{Z}_{12}$.
\item {\bf Conjecture i$_\mathcal{O}$, strong form.} The configurations of twelve cylinders of radius $\mathsf{R}_\mathcal{O}(\delta_{\mathcal{O},j})$, $j=1,2$, 
are, moreover, isolated points in the space $\mathcal{Z}_{12}(\geq\mathsf{\mathsf{R}_\mathcal{O}(\delta_{\mathcal{O},j})})$. 
\end{itemize}
\paragraph{Comment about the remaining corner point.} Here the left and right derivatives of the minimum of the squares of the distances have the same sign. 
Therefore the resulting (at this angle) configuration of twelve cylinders of the corresponding radius 
is not a critical point in the space $\mathcal{Z}_{12}$. 

\subsubsection*{Pair icosahedron/dodecahedron}
Let $\delta_{\mathcal{I},j}$, $j=1,2,3,4$, be the values of the  angle $\delta$ corresponding to four local maxima of the function $\mathsf{R}_\mathcal{I}(\delta)$, see Figures \ref{PlotMinID} and \ref{FineStructure}. Besides, there are two corner points of the minimum of the squares of the distances. Altogether, there are six distinguished values 
of the angle $\delta$ in the $\delta$-rotation process for the pair $\mathcal{I}$.
As it is seen on Figures \ref{PlotMinID} and \ref{FineStructure}, the left and right derivatives of the squares of the distances have the same sign for each corner point which is not a local maximum. 

\begin{itemize}
\item {\bf Conjecture i$_\mathcal{I}$, weak form.} The configurations of thirty cylinders of radius $\mathsf{R}_\mathcal{I}(\delta_{\mathcal{I},j})$, $j=1,2,3,4$, 
are local maxima in the space $\!\mathcal{Z}_{30}$.  
\item {\bf Conjecture i$_\mathcal{I}$, strong form.} The configurations of thirty cylinders of radius $\mathsf{R}_\mathcal{I}(\delta_{\mathcal{I},j})$, $j=1,2,3,4$, 
are, moreover, isolated points in the space $\mathcal{Z}_{30}(\geq\mathsf{\mathsf{R}_\mathcal{I}(\delta_{\mathcal{I},j})})$. 
\end{itemize}

\subsubsection*{Minima}
The $\delta$-rotation process for the pair $\mathcal{O}$ exhibits the unique non-trivial minimum, see Figure \ref{Octahedron-CubeMinimum}. 
The convex span of the vertices on Figure \ref{Octahedron-CubeMinimum} is the cuboctahedron, Figure \ref{Octahedron-CubeMinimumConvexHull}. 
Its cousin for the pair $\mathcal{I}$ is the  icosidodecahedron which appears as the convex span of the vertices of the compounds of rotated edges for the two local minima in the $\delta$-rotation process for the pair $\mathcal{I}$. 

\vskip .2cm
The vertices of the cuboctahedron form the FCC configuration of twelve 
balls touching the unit central ball. Thus the configuration of the thirty balls with centres in the vertices of the icosidodecahedron is a natural analogue of the FCC configuration. 

\vskip .2cm
The balls, touching the central unit ball and centered in the vertices of the icosidodecahedron, touch each other when their common radius is equal to
$$\frac{1}{\sqrt{5}}\ .$$  
Let us call $\mathcal{ID}$ this configuration of balls. By analogy with the FCC configuration, we expect that the configuration $\mathcal{ID}$ is critical and 
is unlockable by a move similar to the Coxeter move, see \S$\,$ 8.4 in \cite{C}, for the configuration FCC.

\vskip .6cm\noindent
{\textbf{Acknowledgements.}
Part of the work of S. S. has been carried out in the framework
of the Labex Archimede (ANR-11-LABX-0033) and of the A*MIDEX project (ANR-11-
IDEX-0001-02), funded by the Investissements d'Avenir French Government program
managed by the French National Research Agency (ANR). Part of the work of S. S. has
been carried out at IITP RAS. The support of Russian Foundation for Sciences (project
No. 14-50-00150) is gratefully acknowledged by S. S. The work of O. O. was supported by
the Program of Competitive Growth of Kazan Federal University and by the grant RFBR
17-01-00585.}

\vspace{-.2cm}

\end{document}